\numberwithin{equation}{section}
\theoremstyle{plain}
\newtheorem{thm}{Theorem}[section]
\newtheorem{lem}[thm]{Lemma}
\newtheorem{theorem*}{Theorem}[]
\theoremstyle{definition}
\newtheorem{defn}[thm]{Definition}
\theoremstyle{remark}
\newtheorem{rem}[thm]{Remark}
\newcommand{\C}{\mathbb{C}}
\newcommand{\N}{\mathbb{N}}
\newcommand{\R}{\mathbb{R}}
\newcommand{\inv}{^{-1}}
\DeclareMathOperator{\ord}{ord\,}
\DeclareMathOperator{\grad}{grad\,}
\title[Topological equivalence] {A criterion for topological equivalence of two variable 
complex analytic function germs}
\author{Adam Parusi\'nski}
\address {Laboratoire Angevin de Recherche en Math\'ematiques, UMR 6093 
du CNRS,  Universit\'e d'Angers,
   2, bd Lavoisier, 49045 Angers cedex, France}
\email{adam.parusinski@univ-angers.fr}
\keywords{topological equivalence of functions, Puiseux pairs, $w_f$ condition, Whitney conditions}
\subjclass{Primary: 32S15. Secondary: 14B05}
\begin{document}

\begin{abstract} 
We show that two analytic function germs $(\C^2,0) \to (\C,0)$  are topologically right equivalent if and only if  
there is a one-to-one correspondence between the irreducible components
 of their zero sets that  preserves the multiplicites of these components,  
their Puiseux pairs, and the intersection numbers of any pairs of 
distinct components.  
\end{abstract}

\maketitle

\maketitle



By Zariski \cite{zariski} and Burau \cite{burau}, the topological type of an embedded 
 plane curve singularity  $(X , 0)\subset  (\C^2,0)$ is determined by the Puiseux pairs of each 
irreducible component (branch) of this curve  and the intersection numbers of any pairs of 
distinct branches.  In this note we show the following

\begin{thm}\label{theorem}
Let $f$, $g : (\C^2,0) \to (\C,0)$ be (not necessarily reduced) analytic function germs. 
Then $f$ and $g$ are topologically right equivalent if and only if  
there is a one-to-one correspondence between the irreducible components
 of their zero sets that  preserves the multiplicites of these components,  
their Puiseux pairs, and the intersection numbers of any pairs of 
distinct components.  
\end{thm}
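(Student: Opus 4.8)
This is essentially Zariski and Burau together with one extra observation. A topological right equivalence $f = g\circ\phi$ gives a homeomorphism of germs of pairs $(\C^2, f\inv(0),0)\cong (\C^2,g\inv(0),0)$, hence an isotopy of the links $L_f, L_g\subset S^3$; the connected components of such a link are the links of the branches, so $\phi$ induces a one-to-one correspondence between the branches of $\{f=0\}$ and those of $\{g=0\}$, and by Zariski--Burau this correspondence preserves the Puiseux pairs and the pairwise intersection numbers (the latter being the linking numbers of the component knots). To see that multiplicities are preserved as well, pick a smooth point $p$ of a branch $C_k$ of $\{f=0\}$ lying on no other branch; in suitable local coordinates $f$ is $(u,v)\mapsto u^{m_k}$ near $p$, and in particular a generic local fibre of $f$ near $p$ has exactly $m_k$ connected components. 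Since $\phi$ conjugates the germ of $f$ at $p$ to the germ of $g$ at $\phi(p)$, the point $\phi(p)$ is a smooth point of a single branch $D_j$ of $\{g=0\}$, the germ of $g$ there is $(u,v)\mapsto u^{m'_j}$ up to homeomorphism, and comparing the number of connected components of generic fibres gives $m_k=m'_j$; letting $p$ range over $C_k$ identifies $D_j$ with the branch matched to $C_k$ by $\phi$. This settles one implication.

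\textbf{The ``if'' direction: reduction to a normal form.} Assume now that the stated combinatorial data of $f$ and $g$ agree. From this data one writes down a canonical \emph{normal form} germ: choose, for each branch, a Puiseux polynomial with the prescribed essential exponents, arrange the remaining coefficients so that prescribed pairs of branches have prescribed mutual contact (hence the prescribed intersection numbers), and raise each resulting irreducible factor to its prescribed multiplicity. Call this germ $f_\ast$; by construction it depends only on the combinatorial data, so the normal forms attached to $f$ and to $g$ coincide. Since topological right equivalence is an equivalence relation, it therefore suffices to prove that every $f$ is topologically right equivalent to its own normal form $f_\ast$. To this end, write $f=\prod_k f_k^{m_k}$ with $f_k$ reduced and irreducible and deform the Puiseux expansions of the branches $\{f_k=0\}$ toward the standard ones, through a path that keeps the entire infinitely-near-point configuration (in particular all essential exponents and all contact orders between branches) and all the multiplicities $m_k$ constant. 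This produces a family $F(x,y,t)=f_t(x,y)$ over a disc with $f_0=f$ and $f_1=f_\ast$, with combinatorial data constant in $t$. It then remains to show that such a family is topologically trivial \emph{as a family of functions}: a fibre-preserving homeomorphism germ of $(\C^2\times\C,0)$ over $\C$ taking $F$ to $f_0$, restricted to the slice $t=1$, is precisely a topological right equivalence between $f$ and $f_\ast$, and we conclude $f\sim f_\ast=g_\ast\sim g$.

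\textbf{The crux: the $w_f$ condition.} The mechanism producing such a trivialization is the relative Kuo--Verdier condition $w_f$: a family of function germs satisfying $w_f$ along the parameter axis is topologically trivial in the strong, function-preserving sense just described, and this works without any isolatedness assumption on the critical set — which is essential here, since for non-reduced $f$ the whole reduced zero set is critical. Thus everything reduces to checking that the interpolating family above satisfies $w_f$. For families of \emph{reduced} plane curve germs the constancy of the combinatorial data is classically equivalent to Whitney equisingularity and to the equisingularity conditions of $w$-type; the genuinely new point is the presence of the multiplicities $m_k$ in $f=\prod f_k^{m_k}$. I would handle this by passing to a simultaneous embedded resolution of the family, which exists because the reduced part is equisingular: upstairs the pullback of $F$ has a normal-crossings zero divisor whose multiplicities along the exceptional and strict-transform components are the constant-in-$t$ integers determined by the $m_k$ and the resolution graph, so locally the pullback of $F$ is a unit times a fixed monomial in two variables; one verifies $w_f$ for this model by an explicit gradient estimate and then transports the estimate back down, the Whitney conditions of the total space being exactly what is needed to control the (non bi-Lipschitz) blow-down. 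Making this last transport survive when both $F$ and its relative gradient vanish to high order along the components of the zero divisor is the main obstacle; once it is in place, $w_f$ yields topological triviality of the family, and the theorem follows.
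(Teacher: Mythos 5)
Your overall architecture coincides with the paper's: ``only if'' via Zariski--Burau (your supplementary argument recovering the multiplicities from the number of connected components of a generic local fibre at a smooth point of a branch is correct, and is a detail the paper leaves implicit), and ``if'' by connecting $f$ to a normal form determined by the combinatorial data through an equisingular family $F$ over a connected parameter space, then invoking $w_f$ and Thom--Mather to trivialize $F$ as a family of functions. Up to the reduction to the claim ``the interpolating family satisfies $w_f$'', your proposal matches the paper (the paper makes the normal family explicit, with extra parameters absorbing the unit and the tails of the Puiseux expansions, and its deformation space is a connected complement of hyperplanes, which justifies the existence of your path).

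The gap is exactly where you place ``the main obstacle'', and it is not a technicality: the inequality $|\partial F/\partial v|\le C\|(x,y)\|\,\|(\partial F/\partial x,\partial F/\partial y)\|$ does not descend through an embedded resolution. The Jacobian of the blow-down degenerates along the exceptional divisor, so an estimate for the pullback of $F$ (unit times monomial) controls the pulled-back differential, not the gradient of $F$ in the original coordinates; precisely in the horn-shaped regions around the branches where $w_f$ is at stake the two differ by unbounded factors, and Whitney regularity of the zero set does not repair this since $w_f$ is a statement off the zero set. The paper does not resolve: in the sketch it quotes the theorems of Brian\c{c}on--Maisonobe--Merle and of \cite{wf}, which derive $w_f$ directly from Verdier regularity of the family of zero sets (valid without isolatedness), and in the detailed version it proves $w_f$ by a direct computation in the base. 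There one covers a neighbourhood of the origin by finitely many horn neighbourhoods $H_d(\tilde\lambda_m,w)$ of the Newton--Puiseux roots; inside a horn minus smaller horns around the nearby roots the term $\partial F/\partial\tilde x$ of the logarithmic derivative of the product dominates $|F|/|\tilde x-\beta|$, while in the complementary regions $t\,\partial F/\partial t$ dominates $M|F|$ with $M>0$ the contact order weighted by the multiplicities $d_k$; in both cases the elementary bound $|\partial\lambda_{k,i}/\partial v|\le C|y|$ (all exponents are $\ge 1$) closes the inequality. To complete your proof you must either establish a genuine descent lemma for $w_f$ under blow-down --- which is the hard part you have postponed --- or replace that step by one of these two routes.
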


\begin{proof} [Sketch of the proof]
The "only if" follows from the above cited result of Zariski and Burau.  

To show "if" we proceed as follows.  
We may connect the zero sets $(f\inv (0),0)\subset (\C^2,0) $ and $(g \inv (0),0)\subset (\C^2,0) $
 by an  equisingular (topologically trivial) deformation of plane curve germs 
$$
(F\inv (0),0)\times P\subset (\C^2,0)\times P,$$
 where $P$ is a parameter space and $F : (\C^2,0)\times P \to (\C,0)$ 
is analytic.  Then,  by \cite{zariski2}  section 8, 
the pair $(F\inv (0)\setminus 0\times P,0\times P)$ satisfies Whitney conditions.  
Consequently, by \cite {BMM} or \cite{wf}, the strata 
$(\C^2,0)\times P\setminus F \inv (0)$, $F \inv (0)\setminus 0\times P$,  and $0\times P$ stratify 
$F$ as a function with the strong Thom condition $w_f$.  
This shows, by Thom-Mather theorem, that 
$F$ is topologically trivial along $P$.  
 \end{proof}

\begin{rem}
The following result was proven in  \cite{king} and \cite{saeki} by different arguments.    
Let $f$, $g : (\C^n,0) \to (\C,0)$, $n\ge 2$, be isolated analytic singularities  such that the 
germs $(\C^n, f\inv (0),0)$ and $(\C^n, g\inv (0),0)$ are homeomorphic.  Then either $f$ or $\overline f$ is 
 topologically right equivalent to $g$.   
  \end{rem}

\begin{rem}
In the isolated singularity case our proof can be expressed in a slightly different way.  By Zariski, c.f. \cite {zariski1}, an equisingular family of plane curves is $\mu$-constant and multipliciy constant, i.e. $\mu^*$ constant in the sense of Teissier \cite {teissier}.    It is known, c.f. \cite {wf}, \cite {BMM}, and the bibliography quoted therein,  that a $\mu^*$ constant 
 family of isolated singularities satisfies $w_f$ and hence is 
topologically trivial.   
\end{rem}

\begin{rem}
In \cite {kuolu} Kuo and Lu introduced a tree model $T(f)$ of 
an isolated singularity $f :(\C^2,0) \to (\C,0)$.  This model allows one to visualise 
 the Puiseux pairs of irreducible components of $f\inv (0)$ and  
the contact orders between them.  Kuo and Lu's model  can be easily addapted to the non-isolated case 
by adding the multiplicities of the components.   Then 
 theorem \ref{theorem} says that  $f$ and $g$ are topologically right  equivalent if and only if their 
tree models coincide.  
\end{rem}

We  give now details.  First we  connect $f$ by an equi\-singular deformation to a normal family that depends only on the embedded topological type of $(f\inv (0),0)\subset (\C^2,0) $ 
and the multiplicities of its branches.  
A similar construction gives an equisingular deformation connecting $f$ and $g$.
For these deformations we give elementary proofs of Whitney and strong Thom conditions and explicit formulae for 
vector fields 
trivializing them.

\medskip
\subsection{Deformation of $f$ to a normal family.}
Fix  $f:(\C^2,0)\to (\C,0)$.   Choose a system of coordinates so that $y=0$ is transverse to the tangent cone to 
$f=0$ at the origin.  Let $f_1, \ldots, f_N$ be the irreducible factors of $f$.  We write  
\begin{equation}\label{weierstrass}
f(x,y) = \prod_{k=1}^N (f_k (x,y))^{d_k}  =  u(x,y) \prod_{k=1 }^N \prod_{j=1}^{m_k}  
(x-\lambda_{k,j}(y))^{d_k} ,
\end{equation}
where $u(x,y)$ is a unit, $u(0,0)\ne 0$,  and    $ x=  \lambda_{k,i} (y)$  are Newton-Puiseux roots of 
$f_k$.  Then 
$$
 \lambda_{k,i} (y) = \sum a_\alpha (k,i) y ^\alpha , 
 $$
 are fractional power series: $\alpha \in \frac 1 {m_k} \N$.   
The  coefficients $a_\alpha (k,i)$ are well-defined if we restrict $\lambda$ 
 to a real half-line  through the origin.  In what follows we choose $y\in \R, y\ge 0$.  
 This allows us to define \emph{the contact order} between two such roots as 
 $$
O   (\lambda_{k_1,i}, \lambda_{k_2,j})
:= \ord_0 (\lambda_{k_1,i}-\lambda_{k_2,j})(y), \qquad y\in \R, y\ge 0.
$$  
All roots of $f_k$ can be obtained from  $ \lambda_{k,1} $ by 
$$
\lambda_{k,j} (y) 
= \sum a_\alpha (k,1) \theta^{j \alpha m_k} y ^\alpha ,
$$
where $\theta = e^{ {2\pi i/ m_k}}$.

 Denote by $\Lambda_{all,k} = \{\alpha_j\}$ the set of all contact orders  between 
$\lambda_{k,1}(y) $   and the other Newton-Puiseux roots of $f$.  
The Puiseux exponents of $\lambda_{k,j}(y) $  form a subset 
$ \Lambda_{P,k} \subset \Lambda_{all,k} $.   
Clearly $a_\alpha(k,j)\ne 0$ if $\alpha\in \Lambda_{P,k} $.  
The other exponents of  $ \Lambda_{all,k} $ can be divided into two groups.  If  
 the denominator of $\alpha$ does not divide the greatest common multiple of 
the denominators of $\alpha'\in \Lambda_{P,k}$, $\alpha'<\alpha$,  then $a_{\alpha}(k,j) = 0$, 
since otherwise $\alpha \in \Lambda_{P,k}$.   For the remaining exponents  there is no condition  
on the coefficient $a_{\alpha}(k,j)$, so we denote their set by 
$\Lambda_{free,k}$.  Finally we set 
$$\Lambda_k:= \Lambda_{P,k} \cup  \Lambda_{free,k}.$$  

For fixed $k$ we order  the roots $\lambda_{k,j}, j=1,\ldots,m_k$, by the 
lexicographic order on the sequences $( \arg (a_\alpha (k,j)), \alpha \in \Lambda_{P,k})$.  
Here $\arg \in [0,2\pi)$.  We obtain exactly the same ordering if we use the lexicographic 
order on $( \arg (a_\alpha (k,j)), \alpha \in \Lambda_{k}, a_\alpha (k,j) \ne 0)$ (for two conjugate 
roots $O (\lambda_{k,i} , \lambda_{k,j})$ is a Puiseux exponent). 
We reorder the roots so that  $\lambda_{k,1}(y) $ is the smallest among all $\lambda_{k,i}(y) $ 
with respect to this order.

\begin{lem}
Let $k_1\ne k_2$.  Then 
$$
\max_{i,j} \, O (\lambda_{k_1,i}, \lambda_{k_2,j})   = O (\lambda_{k_1,1}, \lambda_{k_2,1}) .
$$
\end{lem}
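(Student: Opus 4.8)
The plan is to argue by contradiction. Set
\[
c:=\max_{i,j}O(\lambda_{k_1,i},\lambda_{k_2,j}),\qquad c':=O(\lambda_{k_1,1},\lambda_{k_2,1}),
\]
and suppose $c'<c$; I will contradict the way the roots $\lambda_{k_1,j}$ and $\lambda_{k_2,j}$ were ordered. First I would record a symmetry reduction: the roots of $f_{k_1}$ form a single orbit under the conjugations $y^{1/m}\mapsto\zeta\,y^{1/m}$, where $m$ is a common multiple of $m_{k_1},m_{k_2}$ and $\zeta$ a primitive $m$-th root of unity, and each such conjugation also permutes the roots of $f_{k_2}$ among themselves while preserving $\ord_0$, hence all contact orders. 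Therefore $\max_j O(\lambda_{k_1,i},\lambda_{k_2,j})$ is independent of $i$ and $\max_i O(\lambda_{k_1,i},\lambda_{k_2,j})$ is independent of $j$; both equal $c$. In particular there are indices $i^*,j^*$ with $O(\lambda_{k_1,i^*},\lambda_{k_2,1})=O(\lambda_{k_1,1},\lambda_{k_2,j^*})=c$, and $i^*\neq1$, $j^*\neq1$ since $c'<c$.

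The key step is to show that $c'$ is a Puiseux exponent of \emph{both} $f_{k_1}$ and $f_{k_2}$. The roots $\lambda_{k_1,1}$ and $\lambda_{k_1,i^*}$ both agree with $\lambda_{k_2,1}$ at every exponent $<c'$ (for $\lambda_{k_1,1}$ by the definition of $c'$, for $\lambda_{k_1,i^*}$ because $c'<c$), hence they agree with each other below $c'$; at the exponent $c'$, however, $\lambda_{k_1,1}$ already differs from $\lambda_{k_2,1}$ while $\lambda_{k_1,i^*}$ still agrees with it, so $O(\lambda_{k_1,1},\lambda_{k_1,i^*})=c'$. Being a contact order between two distinct conjugate roots of $f_{k_1}$, $c'$ is a Puiseux exponent of $f_{k_1}$; thus $c'\in\Lambda_{P,k_1}$ and $a_{c'}(k_1,j)\neq0$ for all $j$. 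The symmetric argument with $\lambda_{k_2,1},\lambda_{k_2,j^*}$ and $\lambda_{k_1,1}$ gives $c'\in\Lambda_{P,k_2}$ and $a_{c'}(k_2,j)\neq0$ for all $j$.

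Now I would conclude. Since $c'\in\Lambda_{P,k_1}$ the coefficient $a_{c'}(k_1,\cdot)$ has constant nonzero modulus along the roots of $f_{k_1}$, so distinct roots have distinct arguments at $c'$; as $\lambda_{k_1,1}$ and $\lambda_{k_1,i^*}$ agree at every exponent of $\Lambda_{P,k_1}$ below $c'$, minimality of $\lambda_{k_1,1}$ in the chosen lexicographic order forces $\arg a_{c'}(k_1,1)<\arg a_{c'}(k_1,i^*)$. Likewise $\arg a_{c'}(k_2,1)<\arg a_{c'}(k_2,j^*)$. But $c'<c$ yields $a_{c'}(k_1,1)=a_{c'}(k_2,j^*)$ and $a_{c'}(k_1,i^*)=a_{c'}(k_2,1)$, whence
\[
\arg a_{c'}(k_1,1)=\arg a_{c'}(k_2,j^*)>\arg a_{c'}(k_2,1)=\arg a_{c'}(k_1,i^*),
\]
contradicting the previous inequality. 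Hence $c'=c$, which is the assertion. The step I expect to be the real obstacle is the middle one: one must notice that a contact order strictly below the maximum is automatically a common Puiseux exponent of the two branches — it is exactly this that places $c'$ in the index sets $\Lambda_{P,k_1},\Lambda_{P,k_2}$ governing the orderings and makes the relevant coefficients nonzero, so that comparing their arguments is legitimate and delivers the contradiction.
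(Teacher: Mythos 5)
Your proof is correct and follows essentially the same route as the paper's: argue by contradiction, use the conjugation symmetry to produce the indices $i^*,j^*$ (the paper's $i',j'$), and derive contradictory strict inequalities between the arguments of the coefficients at $\alpha=O(\lambda_{k_1,1},\lambda_{k_2,1})$ via the lexicographic minimality of $\lambda_{k_1,1}$ and $\lambda_{k_2,1}$. You in fact supply more detail than the paper at the one delicate point (that $\alpha$ is a common Puiseux exponent, which legitimizes the strict comparison of arguments); the only quibble is that your parenthetical claim that distinct conjugate roots always have distinct coefficients at a Puiseux exponent is not needed and not true in general, but the special case you actually use follows from $O(\lambda_{k_1,1},\lambda_{k_1,i^*})=\alpha$.
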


\begin{proof}
Suppose, contrary to our claim, that  $O (\lambda_{k_1,1}, \lambda_{k_2,1}) < O (\gamma_{k_1,i}, \gamma_{k_2,j}) $.  Then there exist $j'$ and  $i'$ such that  
$$
O (\lambda_{k_1,i}, \lambda_{k_2,j}) = O (\lambda_{k_1,1}, \lambda_{k_2,j'}) = 
O (\lambda_{k_1,i'}, \lambda_{k_2,1}) . 
$$
Then for $\alpha = O (\lambda_{k_1,1}, \lambda_{k_2,1})$,  
$ a_\alpha (k_1,1)=  a_\alpha (k_2,j')\ne 0$ and $ a_\alpha (k_1,i')=  a_\alpha (k_2,1)\ne 0$, 
and hence 
$$
\arg( a_\alpha (k_1,1)) =  \arg (a_\alpha (k_2,j')) > \arg (a_\alpha (k_2,1))
$$
and 
$$
\arg( a_\alpha (k_2,1)) =  \arg (a_\alpha (k_1,i')) > \arg (a_\alpha (k_1,1))
$$
that is impossible.  
\end{proof}

\medskip
\begin{defn}
\emph{The deformation space $D(f) \subset \prod_k \C^{ |\Lambda_k|}$} is defined as follows.  
Write an element of 
$ \C^{ |\Lambda_k|}$ as $a(k)= (a_\alpha (k); \alpha \in \Lambda_k) 
\in  \C^{ |\Lambda_k|}$.  Then, $\underline a =  (a(k); k=1,\ldots, N) \in D(f)$ if : 
\begin{enumerate}
\item
 if $\alpha \in \Lambda_{P,k}$ the $a_{\alpha}(k) \ne 0$,
\item
if $\alpha < O (\lambda_{k_1,1}, \lambda_{k_2,1}) $ and 
$\alpha \in \Lambda_{k_1} \cap \Lambda_{k_2}$ 
then $a_{\alpha}(k_1) =a_{\alpha}(k_2)$.  
\item
if $\alpha = O (\lambda_{k_1,1}, \lambda_{k_2,1}) $ and 
$\alpha \in \Lambda_{k_1} \cap \Lambda_{k_2}$ 
then $a_{\alpha}(k_1) \ne a_{\alpha}(k_2)$.  
\end{enumerate}
\end{defn}

\medskip
Write each Newton-Puiseux root of $f$ as 
 \begin{equation}\label{expansions}
 \lambda_{k,j} (y) =  \sum_{\alpha \ge 1}  a_\alpha (k,j) y ^\alpha 
 = \sum_{\alpha\in \Lambda_k}  a_\alpha (k,j) y ^\alpha  + R_{k,j}(y) .  
 \end{equation}
For  $(s,\underline a) \in \C \times   D(f)$ write 
$$
 \lambda_{k,j} (s, \underline a, y) 
=  \sum_{\alpha\in \Lambda_k}  a_\alpha (k)   \theta_k^{j\alpha m_k} y^\alpha  + s R_{k,j}(y),
$$
where $\theta_k = e^{ {2\pi i/ m_k}}$.  
Consider the following deformation of $f$ 
\begin{equation}\label{deformation}
F (\tau,s,u_0,\underline a, x,y) = 
(\tau ( u(x,y)- u(0,0)) + u_0) \prod_{k=1 }^N \prod_{i=1}^{m_k}  
(x-   \lambda_{k,i} (s, \underline a, y)  )^{d_k} ,
\end{equation}
where $(\tau,s,u_0,\underline a) \in P: = \C\times \C \times  \C^*\times D(f)$.  ($P$ stands for the 
parameter space).  Then $F$ is analytic in all variables and 
 $f(x,y) = F (1,1,u(0,0),\underline a(f), x,y) $, where  $\underline a(f)$ is 
  given by the coefficients $a_\alpha (k,1)$ of the Newton-Puiseux roots of $f$.  
  
  We call $F : D(f) \times (\C^2,0)\to (\C,0)$ \emph{the normal family of 
  germs associated to  $f$}.   It depends only on the topological type of 
$(f\inv (0),0) \subset  (\C^2,0)$ and the multiplicities $d_k$ of irreducible components of $f$.

We may as well embedd $f$ and $g$ in one equisingular family by taking 
\begin{equation}\label{fgdeformation}
F (\tau,s,u_0,\underline a, x,y) = 
(\tau_1  u_1(x,y) + \tau_2 u_2(x,y) + u_0) \prod_{k=1 }^N \prod_{i=1}^{m_k}  
(x-   \lambda_{k,i} (s, \underline a, y)  )^{d_k} ,
\end{equation}
where $u_1 = u_f (x,y) - u_f(0,0)$ and $u_2 = u_g (x,y) - u_g (0,0)$ and $u_f$, resp. $u_g$, denote
 the unit of \eqref{weierstrass} for $f$ and $g$ respectively, $s=(s_f,s_g)$ and 
$$
 \lambda_{k,j} (s, \underline a, y) 
=  \sum_{\alpha\in \Lambda_k}  a_\alpha (k)   \theta_k^{j\alpha m_k} y^\alpha  + s_f R_{f,k,j}(y) 
+ s_g R_{g,k,j}(y).  
$$

\smallskip

\subsection{Whitney Conditions and Thom Condition}
Let $X= F\inv (0) \subset U \subset P\times \C^2$, where $U$ is a small open neighbourhood of 
$P\times \{0,0\}$ in $P\times \C^2$.   
We show by elementary computations that  $(U\setminus X, X\setminus P, P)$ as a stratification, 
satisfies Whitney condition and the strong Thom condtion $w_f$.

Let  $G$ be the reduced version of $F$:
\begin{equation*}
 G (\tau,s,\underline a, x,y) = u(\tau,x,y) \prod_{k=1 }^N \prod_{i=1}^{m_k}  
(x-  \lambda_{k,i} (s, \underline a, y)  ) ,
\end{equation*}
The pair 
$(X\setminus P, P)$ satisfies   Verdier condition $w$, \cite {verdier}, 
equivalent to Whitney conditions in the complex case, if for any $(x_0,y_0) \in P$ there is a constant $C$ such 
that on  $X$ near $(x_0,y_0)$ the following inequality holds  
  \begin{equation}\label{w}
| \frac {\partial G}{\partial v} |  \le C \|(x,y)\| 
\| ( \frac {\partial G}{\partial x}, \frac {\partial G}{\partial y})  \| 
\end{equation}
where $v$  denotes any of the coordinates on the parameter space $P$.   
 A direct computation on $x= \lambda_{k_0,i_0} (s, \underline a, y) $  gives 
\begin{eqnarray*}
& & \frac {\partial G}{\partial v } = (- \frac {\partial \lambda_{k_0,i_0}} 
{\partial v})\prod_{(k,i) \ne (k_0,i_0)}   
(x-  \lambda_{k,i} (s, \underline a, y)  ) , \\
& & \frac {\partial G}{\partial x} = \prod_{(k,i) \ne (k_0,i_0)}   
(x-  \lambda_{k,i} (s, \underline a, y)  ) .
\end{eqnarray*}
and \eqref{w} follows from 
\begin{equation}\label{useful}
|\frac {\partial \lambda_{k_0,i_0}} {\partial v}|\le C |y|, 
\end{equation}
that is a consequence of the fact that all exponents in 
$\bigcup_k \Lambda_k$ are $\ge 1$.

Similarly, $(U\setminus X, X\setminus P, P)$ as a stratification of $F$ satisfies the strong Thom condition $w_f$ 
 if for any $(x_0,y_0) \in P$ there is a constant $C$ such that in a neighborhood of 
  $(x_0,y_0)$ the following inequality holds  
  \begin{equation}\label{wf}
| \frac {\partial F}{\partial v} |  \le C \|(x,y)\| 
\| ( \frac {\partial F}{\partial x}, \frac {\partial F}{\partial y})  \| 
\end{equation}
in the complement of the zero set of $F$.  

Replace $y$ by $t^n$ so that the roots 
$\lambda_{k,i} (s, \underline a, t^n) $ become analytic.  We denote them using a single index as 
$\tilde \lambda_{m} (s, \underline a, t) $.  Given a power series 
$\xi (v,t) = \sum_{\alpha =1}^\infty a_\alpha  (v) t^\alpha$, where $v\in P$,  we consider 
\emph {a horn-neighborhood of $\xi$} 
$$
H_d (\xi, w) = \{ (x,v,t) ; |x- \xi (v,t)|\le w |t|^d \} ,
$$
where $d\in \N\cup \{0\}$ and $w>0$, compare  \cite{kuolu} $\S$ 6.  
In what follows $\xi (v,t) $ will be one of the roots $\tilde \lambda_m$.  
In $H_d (\xi, w)$ we use the coordinates $\tilde x,t,v$, where 
$$
x= \tilde xt^d + \xi(v,t).  
$$
Let $I(\xi,d)= \{m; \, O(\tilde \lambda_{m} , \xi) \ge d\}$.  Then 
\begin{equation}\label{inhorn}
F(v,\tilde x, t) = u(v,\tilde x, t) \, t ^M \prod_{m\in I(\xi,d)} (\tilde x -  \beta_{m} ( v, t))^{d_m}, 
\end{equation}
where $M= \sum_{m\notin I(\xi,d)} d_m O(\tilde \lambda_{m},\xi)  + d \sum_{m\in I(\xi,d)} d_m $,  $u$ is a unit, 
and $ \beta_{m}  = t^{-d}(\tilde  \lambda_{m} -\xi)$.

Denote $e_{m,j} = O (\tilde\lambda_m,\tilde\lambda_j)$ for $j\ne m$.  Let  $C>0$ stands  for a large 
constant and $\varepsilon > 0$ for a small constant. 

\medskip
\begin{lem}\label{firstcase}
For  $d\in \N\cup \{0\}$ arbitrary, 
 \eqref{wf} holds on 
$\hat H_d (\tilde \lambda_m, \varepsilon ,C) = H_d (\tilde \lambda_m, \varepsilon) \setminus \bigcup_l 
H_{e_{m,l}}(\tilde \lambda_m, C)$, 
where the union is taken over all $l\ne m$ such that $O(\tilde\lambda_m, \tilde \lambda_l) > d $.  
\end{lem}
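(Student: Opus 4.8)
The plan is to work inside the horn-neighborhood $\hat H_d(\tilde\lambda_m,\varepsilon,C)$ using the factorization \eqref{inhorn}, and to estimate the two sides of \eqref{wf} by comparing orders in $t$. First I would fix a point $(v,\tilde x,t)$ in $\hat H_d(\tilde\lambda_m,\varepsilon,C)$. Since the point lies in $H_d(\tilde\lambda_m,\varepsilon)$ we have $|\tilde x|\le\varepsilon$, and since it lies outside each $H_{e_{m,l}}(\tilde\lambda_m,C)$ with $O(\tilde\lambda_m,\tilde\lambda_l)>d$, we get a lower bound $|\tilde x-\beta_l(v,t)|\ge (C-\varepsilon)|t|^{e_{m,l}-d}$ for those $l$; here one uses that $\beta_l=t^{-d}(\tilde\lambda_l-\xi)$ has order exactly $e_{m,l}-d>0$ in $t$, so $\beta_l\to 0$ and the dominant term is controlled by the horn exclusion. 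For the indices $l$ with $O(\tilde\lambda_m,\tilde\lambda_l)\le d$ we instead have $|\tilde x-\beta_l|$ comparable to $|\beta_l|$, of order $e_{m,l}-d\le 0$ in $t$ (or, if $e_{m,l}=d$, bounded below away from zero on a suitable sub-horn — this is the place where one may need to shrink $\varepsilon$). Collecting these, one reads off the order in $t$ of $\prod_{l\in I(\xi,d)}(\tilde x-\beta_l)^{d_l}$ and hence of $F$ itself via \eqref{inhorn}.

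Next I would differentiate \eqref{inhorn}. The key observation is that $\partial F/\partial\tilde x$, $\partial F/\partial t$ and $\partial F/\partial v$ are all obtained from $F$ by logarithmic differentiation: $\partial F/\partial\tilde x = F\cdot\big(\partial u/\partial\tilde x / u + \sum_{l\in I} d_l/(\tilde x-\beta_l)\big)$, and similarly for $t$ (which picks up the $t^M$ factor, contributing $M/t$, plus the $\partial\beta_l/\partial t$ terms) and for $v$ (only the $u$ and $\beta_l$ terms, since $M$ does not depend on $v$). The estimate \eqref{useful}, i.e. $|\partial\tilde\lambda_m/\partial v|\le C|t|$ after the substitution $y=t^n$ — more precisely all exponents being $\ge 1$ forces $|\partial\beta_l/\partial v|$ and $|\partial\beta_l/\partial t|$ to be controlled — shows that the $v$-derivative of each logarithmic factor is smaller by a factor $|t|$ than the corresponding $\tilde x$- or $t$-derivative. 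The dominant term in $\|(\partial F/\partial x,\partial F/\partial y)\|$ should be the one coming from $t^M$, namely of order $|t|^{-1}|F|$ (after translating back from $\partial/\partial\tilde x,\partial/\partial t$ to $\partial/\partial x,\partial/\partial y$ via the chain rule, which only costs powers of $|t|^d$ that cancel), so $\|(\partial F/\partial x,\partial F/\partial y)\|\gtrsim |t|^{-1}|F|$ while $|\partial F/\partial v|\lesssim |F|$, and since $\|(x,y)\|\gtrsim|t|^n\gtrsim|t|$ near the parameter stratum, \eqref{wf} follows.

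I would organize the bookkeeping by separating the contributions of the three groups of indices: $l\notin I(\xi,d)$ (absorbed into $M$ and into the unit), $l\in I(\xi,d)$ with $e_{m,l}>d$ (controlled from below by the horn exclusion), and $l\in I(\xi,d)$ with $e_{m,l}\le d$ including $l=m$ (where $\beta_m=0$ and the factor is just $\tilde x^{d_m}$ with $|\tilde x|\le\varepsilon$). For each group one records the order in $t$ of the factor and of its logarithmic $\tilde x$-, $t$- and $v$-derivatives, and checks the one-step inequality "$v$-derivative $\le C|t|\cdot$($x$- or $y$-derivative)" termwise; summing and using that $F$ is nonzero off $F^{-1}(0)$ to divide through gives the claim.

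The main obstacle I expect is the borderline case $e_{m,l}=d$ (and, relatedly, making sure the constant $C$ in the excluded horns and the constant $\varepsilon$ in the outer horn can be chosen so that $|\tilde x-\beta_l|$ stays bounded below by a positive constant times the expected power of $|t|$ on all of $\hat H_d$): there the factor $\tilde x-\beta_l$ need not be bounded away from zero, and one has to argue that either such $l$ do not occur for this $d$, or that the corresponding vanishing is harmless because it is shared between $F$ and its dominant derivative and thus cancels in the ratio. Checking that \eqref{wf} is a statement about a ratio, so that common zero factors of numerator and denominator are irrelevant, is what makes this case tractable; the rest is the careful but routine order-counting sketched above, presumably continued in the next lemma for the remaining sub-horns $H_{e_{m,l}}(\tilde\lambda_m,C)$ that were excised here.
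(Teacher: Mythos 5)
Your setup --- passing to horn coordinates, using the factorization \eqref{inhorn}, differentiating logarithmically, grouping the indices by contact order, and invoking \eqref{useful} --- is the paper's. The gap is in the one step that carries the whole lemma: the lower bound for $\|(\partial F/\partial x,\partial F/\partial y)\|$. You attribute the dominant term to the factor $t^M$, i.e.\ to the contribution $M/t$ of the logarithmic $t$-derivative. That is the mechanism of Lemma~\ref{secondcase}, where $M>0$ is explicitly used; it is not available here. Lemma~\ref{firstcase} allows arbitrary $d$, including $d=0$, and for $d=0$ every root lies in $I(\xi,0)$, so $M=\sum_{j\notin I}d_j\,O(\tilde\lambda_j,\xi)+0\cdot\sum_{j\in I}d_j=0$: there is no $t^M$ factor, $\partial F/\partial t$ is small rather than large, and your claimed bound $\|(\partial F/\partial x,\partial F/\partial y)\|\ge c\,|t|^{-1}|F|$ has no source. (The case $d=0$ cannot be dropped: it is needed for the final covering of a neighborhood of the origin by horns.) What the excised horns are really for --- and what is absent from your proposal --- is non-cancellation in the logarithmic $x$-derivative: outside $\bigcup_l H_{e_{m,l}}(\tilde\lambda_m,C)$, every root $\beta_l$ with $O(\tilde\lambda_m,\tilde\lambda_l)>d$ satisfies $|\beta_l|\le\delta|\tilde x|$ once $C$ is large, so the corresponding terms of $\sum_j d_j(\tilde x-\beta_j)^{-1}$ are all close to $\tilde x^{-1}$ and, the $d_j$ being positive, add up instead of cancelling, while the terms with $e_{m,j}=d$ stay bounded for $\varepsilon$ small. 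This yields $|\partial F/\partial\tilde x|\ge\delta\,|\tilde x-\beta_i|^{-1}|F|$, whose large factor matches term by term the large factor in \eqref{dv}, and \eqref{wf} then follows from \eqref{useful}. Without such a statement you have no lower bound at all: a priori the polar curve $\partial F/\partial x=0$ could meet $\hat H_d$, and ruling this out is precisely the point of the lemma's choice of excised horns.

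Two secondary problems in your route: the chain $\|(x,y)\|\ge c|t|^n\ge c|t|$ is backwards for $n\ge2$ and $|t|\le1$; and the intermediate claim $|\partial F/\partial v|\le C|F|$ does not follow from \eqref{useful} alone, since for a cluster index $j$ the term $|\partial\beta_j/\partial v|/|\tilde x-\beta_j|$ is only controlled by $C|t|^{\,n-e_{m,j}}$, which blows up when $e_{m,j}>n$. (This could be repaired by observing that the coefficients of $\tilde\lambda_j-\tilde\lambda_m$ below the contact order vanish identically in $v$, but the paper's argument never needs $|\partial F/\partial v|\le C|F|$ in this lemma --- it compares the singular factors on the two sides of \eqref{wf} directly rather than bounding them separately.)
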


\begin{proof}
By \eqref{inhorn} 
\begin{equation}\label{dv}
\frac {\partial F} {\partial v} =  \Bigl ( \sum _j  d_k \frac {-\partial \beta_j /\partial v} 
{\tilde x -\beta_j (t,v)}  + \frac {\partial u/\partial v } u  \Bigr ) F 
\end{equation}
$$
\frac {\partial F} {\partial x} = t^{-d} \, \frac {\partial F} {\partial \tilde x} = 
\Bigl ( \sum _j  d_k \frac {1} 
{\tilde x -\beta_j (t,v)} +   \frac {\partial u/\partial \tilde x } u \Bigr ) \, t^{-d} \,  F   . 
$$
If $( \tilde x,t,v) \in  \hat H_{d}( \tilde \lambda_m, \varepsilon,C) $ and $O(\tilde\lambda_i, \tilde \lambda_l) > d $
 then 
$|\beta _i - \beta _ l | \le \delta |\tilde x -\beta_i |$, for $\delta$ small if $C$ is large,  and hence 
$$
|\frac {\partial F} {\partial \tilde x}| \ge \frac \delta {| \tilde x -\beta_i |} \, | F | .
$$
Then \eqref{wf} follows from \eqref{useful} since
$|\frac {\partial \beta_j} {\partial v}|\le C |t^{-d}y|  $.  
\end{proof}

\begin{lem}\label{secondcase}
Fix $d=e_{m,j}$.  Then 
\eqref{wf} holds on 
$\tilde H_{d} ( \tilde \lambda_m, C, \varepsilon )  = H_{d} ( \tilde \lambda_m, C) \setminus \bigcup_l H_{d}( \tilde \lambda_l, \varepsilon)$, 
where the union is taken over all $l$ such that $O( \tilde\lambda_m, \tilde \lambda_l) \ge d$ 
(including $l=m$).    
\end{lem}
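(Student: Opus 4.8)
The plan is to verify \eqref{wf} inside $\tilde H_{d}(\tilde\lambda_m,C,\varepsilon)$ using the horn coordinates $\tilde x,t,v$ and the factorisation \eqref{inhorn} with $\xi=\tilde\lambda_m$ and $I=I(\tilde\lambda_m,d)$. First I would read off from the defining conditions of $\tilde H_{d}$ that $|\tilde x|\le C$ while $|\tilde x-\beta_l|\ge\varepsilon$ for every $l\in I$; since $|\beta_l|$ is bounded for $l\in I$ (because $O(\tilde\lambda_l,\tilde\lambda_m)\ge d$), each factor $|\tilde x-\beta_l|$, $l\in I$, is pinched between two positive constants, whence $|F|\asymp|t|^{M}$ with $M=\sum_{l\notin I}d_lO(\tilde\lambda_l,\tilde\lambda_m)+d\sum_{l\in I}d_l\ge d\,d_m\ge 1$. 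I would also record that $d=e_{m,j}\ge n$ and that $\tilde\lambda_m$ has $t$-order $\ge n$.

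In contrast with Lemma \ref{firstcase}, inside this annular region $F$ may have critical points in the $\tilde x$-direction, so $\partial F/\partial x$ by itself need not be large; the substitute is the fact that $M\neq 0$. Differentiating \eqref{inhorn} at fixed $\tilde x,v$ gives
\[
\frac{\partial F}{\partial t}=F\Bigl(\frac{M}{t}+\frac{\partial_t u}{u}-\sum_{l\in I}\frac{d_l\,\partial_t\beta_l}{\tilde x-\beta_l}\Bigr),
\]
and the term in brackets other than $M/t$ is bounded: the roots $\tilde\lambda_l(v,t)$, hence the $\beta_l$ with $l\in I$ and the unit $u$ of \eqref{inhorn}, are analytic in $t$, while $|\tilde x-\beta_l|\ge\varepsilon$. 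Thus $|\partial F/\partial t|\ge\tfrac{M}{2|t|}|F|$ for $|t|$ small. Writing the chain rule for $x=\tilde x\,t^{d}+\tilde\lambda_m(v,t)$, $y=t^{n}$,
\[
\frac{\partial F}{\partial t}=\bigl(d\tilde x\,t^{d-1}+\partial_t\tilde\lambda_m\bigr)\frac{\partial F}{\partial x}+n\,t^{n-1}\frac{\partial F}{\partial y},
\]
where the first coefficient is $O(|t|^{n-1})$ (using $d\ge n$ and that $\tilde\lambda_m$ has $t$-order $\ge n$), I obtain $\|(\partial F/\partial x,\partial F/\partial y)\|\ge c\,|t|^{-n}|F|$.

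For the left-hand side, since the factor $t^{M}$ in \eqref{inhorn} is independent of the parameters, $\partial F/\partial v=F\bigl(\partial_v\log u-\sum_{l\in I}d_l(\partial_v\beta_l)/(\tilde x-\beta_l)\bigr)$, so it is enough to bound $|\partial_v\beta_l|$ for $l\in I$ and $|\partial_v\log u|$ by a constant. This is the heart of the matter and is precisely what the construction of $D(f)$ provides: for $l\in I$ the roots $\tilde\lambda_l$ and $\tilde\lambda_m$ have contact $\ge d$, and the conditions defining $D(f)$ together with the choice of the sets $\Lambda_k$ force all coefficients of $t$-order $<d$ in the difference $\tilde\lambda_l-\tilde\lambda_m$ to vanish identically on $D(f)$ --- in other words, no ``shallow'' deformation parameter (a coordinate $a_\alpha(k)$ with $n\alpha<d$, or the parameter $s$) can pull the cluster $\{l\in I\}$ apart from $\tilde\lambda_m$. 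Consequently $\tilde\lambda_l-\tilde\lambda_m=t^{d}\beta_l$ with $\beta_l$ analytic jointly in $(v,t)$, so $\partial_v\beta_l$ is bounded, and the same equisingularity makes $u$ analytic and bounded away from $0$, so $\partial_v\log u$ is bounded too. Hence $|\partial F/\partial v|\le C|F|$, and combining this with $\|(x,y)\|\ge|t|^{n}$ and the gradient bound of the previous paragraph gives $|\partial F/\partial v|\le C|F|\le (C/c)\,\|(x,y)\|\,\|(\partial F/\partial x,\partial F/\partial y)\|$, which is \eqref{wf}. I expect the one genuinely delicate point to be the uniform vanishing on $D(f)$ of the low-order part of $\tilde\lambda_l-\tilde\lambda_m$ for $l\in I$; once that is in hand, the rest is bookkeeping with the horn coordinates.
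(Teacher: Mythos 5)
Your proof is correct and follows essentially the same route as the paper's: both arguments rest on the lower bound $|t\,\partial F/\partial t|\ge (M-\delta)|F|$ coming from $M>0$ together with the boundedness of the remaining bracketed terms on the annular region, the upper bound $|\partial F/\partial v|\le C|F|$ from the logarithmic-derivative formula \eqref{dv}, and the chain rule converting $t\,\partial_t$ into $x,y$-derivatives with coefficients of size $O(|t|^{n})=O(\|(x,y)\|)$. You spell out some details the paper leaves implicit (notably why $\partial_v\beta_l$ is bounded, which indeed rests on the contact orders being constant on $D(f)$), but the approach is the same.
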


\begin{proof}
Note that on  $\tilde H_{d} (\tilde \lambda_m, C, \varepsilon )$, $ |\tilde x -\beta_j (t,v)| \ge \varepsilon$ and 
$ \partial \beta_j /\partial t$ is bounded.  Therefore 
\begin{equation*}
|t \frac {\partial F} {\partial t}| = |   \Bigl (M+ \sum _j  d_j \frac {-t \, \partial \beta_j /\partial t} 
{\tilde x -\beta_j (t,v)}  +  \frac {t\partial u/\partial t } u \Bigr ) \,F| \ge (M-\delta) \cdot |F|
\end{equation*}
where $\delta\to 0$ as $t\to 0$.  Since $M>0$,  we have after \eqref{dv}
\begin{equation}\label{wfbound2}
|\frac {\partial F} {\partial v}| \le C |F|   \le |t \frac {\partial F} {\partial t}| \le  |  d (y \frac {\partial F} {\partial x} 
+ y \frac {\partial F} {\partial y} )|
\end{equation} 
\end{proof}

To complete the proof of \eqref{wf} we note that a finite family of horns $\hat H_d (\tilde \lambda_m, \varepsilon ,C)$, 
$\tilde H_{d} ( \tilde \lambda_m, C)$, with $d = e_{m,j}$ or $d=0$, covers a neighborhood of the origin in $\C^2$ (times 
a neighborhood in the parameter space). 


\smallskip

\subsection{Trivialization}

By  Thom-Mather theory  the family $F$ is locally topologically trivial along $P$.  
Moreover, it can be show easily that it can be trivialized by Kuo's vector field, cf. \cite{kuo}. 
Suppose that the parameter $P$ 
space is one-dimensional with the parameter $v$.  Define the vector field 
$V$ on each stratum separately by formulae :
\begin{eqnarray*}
&  V: = \frac \partial {\partial v}    & \qquad \text { on $P$ } \\
&  V: = \frac \partial {\partial v} - \frac{  { \partial G} /{\partial v}}  {\|\grad_{x,y} G\|^2} \Bigl(
\overline { \frac {\partial G} {\partial x}} \frac {\partial } {\partial x} +
 \overline { \frac {\partial G} {\partial y}} \frac {\partial } {\partial y}
\Bigr ) 
 &  \qquad \text { on $X\setminus P$ } \\
 & V: = \frac \partial {\partial v} - \frac{  { \partial F} /{\partial v}}   {\|\grad_{x,y} F\|^2} \Bigl(
\overline { \frac {\partial F} {\partial x}} \frac {\partial } {\partial x} +
 \overline { \frac {\partial F} {\partial y}} \frac {\partial } {\partial y}
\Bigr ) 
& \qquad \text { on $U\setminus X$ } . 
\end{eqnarray*}
The flow of $V$ is continuous and preserves the levels of $F$ by the arguments of \cite{kuo}. \\
 


\medskip

\end{document}